\theoremstyle{plain}
\newtheorem{theorem}{Theorem}[]
\newtheorem{lemma}{Lemma}[]
\newtheorem{example}{Example}[]
\newtheorem{remark}{Remark}[]
\begin{document}
\def\R{\mathbb{R}}
\def\P{\mathbb{P}}
\def\hK{h_{\scriptscriptstyle K}}
\def\oK{\omega_{\scriptscriptstyle K}}
\def\om{{\omega}}
\def\Com{C_\dag}
\def\Cinv{{C_\sharp}}
\def\CoT{{C_\dag}}
\def\CInt{{C_\flat}}
\def\Cdh{C_\diamond}
\def\diff{(\om-\oK)}
\def\p{\partial}
\def\Wi#1{{W^{#1}_\infty}}
\def\Ws#1{{W^{#1}_s}}
\def\cP{\mathcal{P}}
\def\cN{\mathcal{N}}
\def\PK{\Pi_{\scriptscriptstyle K}}
\title{A General Superapproximation Result}
\author{Susanne C. Brenner}
\address{Susanne C. Brenner, Department of Mathematics and Center for
 Computation and Technology, Louisiana State University, Baton Rouge, LA 70803, USA}
\email{brenner@math.lsu.edu}
\thanks{This work was supported in part
 by the National Science Foundation under Grant No.
 DMS-19-13035.}
\begin{abstract}
  A general superapproximation result is derived in this paper which is
  useful for the local/interior error analysis of finite element methods.
\end{abstract}
\subjclass{65N30}
\keywords{superapproximation, finite element}
\maketitle
%
 Superapproximation results are useful tools for the local/interior
 error analysis of
 finite element methods \cite{Wahlbin:2000:Handbook}.
 Our goal is to derive a general superapproximation result
 that includes many of the results in the literature as special cases.
\par
 We will adopt the usual notation of function spaces and norms
  that can be found for example in
  \cite{ADAMS:2003:Sobolev,Ciarlet:1978:FEM,BScott:2008:FEM}.
  The set-up is as follows.
\begin{itemize}
\item $\om$ is a  $C^\infty$ function on $\R^N$ such that, for a positive number $d$,
\begin{equation}\label{eq:omBdd}
  |\om|_{W^j_\infty(\R^N)}\leq \Com d^{-j}  \qquad j=0,1,2,\ldots.
\end{equation}
\item $(K,\cP,\cN)$ is a finite element space ({\`a la} Ciarlet), where $K$ is a
 connected compact subset of $\R^N$,
 $\cP$ is a finite dimensional vector space of polynomials,
 $\cN$ is a set of nodal variables (degrees of freedom), and the diameter
 $\hK$ of $K$ satisfies
\begin{equation}\label{eq:dh}
 \hK\leq  d.
\end{equation}
 We will allow a slight abuse of notation to write the Sobolev spaces $H^\ell(\textrm{int}(K))$
 (resp.,  $W^\ell_\infty(\textrm{int}(K))$) as
 $H^\ell(K)$ (resp.,  $W^\ell_\infty(K)$).
\item $\oK$ is the mean of $\om$ over $K$ so that
\begin{align}
   |\oK|&\leq \Com,\label{eq:oTBdd}\\
   |\om-\oK|_{L_\infty(K)}&\leq \CoT d^{-1}\hK.\label{eq:diffBdd}
\end{align}
\item The linear operator $\PK:C^\infty(K)\rightarrow \cP$ is the nodal
interpolation operator that satisfies
\begin{equation}\label{eq:Invariant}
  \zeta-\PK\zeta=0\qquad\forall\,\zeta\in\cP,
\end{equation}
 and there exists a positive integer $k$
(depending on the maximum order of differentiation among the
 nodal variables in $\cN$) such that
\begin{equation}\label{eq:IntEst}
  |\zeta- \PK\zeta|_{\Wi{\ell}(K)}\leq \CInt \hK^{k+1-\ell}|\zeta|_{\Wi{k+1}(K)}
  \qquad\forall\,\zeta\in C^\infty(K),\;0\leq\ell\leq k+1.
\end{equation}
\item
 The following  inverse estimates are satisfied:
\begin{alignat}{3}
  |\chi|_{\Ws{q}(K)}&\leq \Cinv \hK^{p-q}|\chi|_{\Ws{p}(K)}&\qquad&
   \forall\,\chi\in\cP,\;s=2,\infty, \;
   \text{and}\;0\leq p<q\leq k+1,\label{eq:InvEst}\\
   |\chi|_{\Wi{p}(K)}&\leq C_\diamond \hK^{-N/2}|\chi|_{H^p(K)}
   &\qquad&\forall\,\chi\in\cP\;\text{and}\;0\leq p\leq k.\label{eq:InvEst2}
 \end{alignat}
\end{itemize}
\par
  Inverse estimates and interpolation error estimates for
  finite elements can be found in
  \cite{Ciarlet:1978:FEM,BScott:2008:FEM}.  Note that,
  by the Bramble-Hilbert lemma \cite{BH:1970:Lemma},
  the interpolation error estimates in \eqref{eq:IntEst} are valid
  provided the space $\cP$ of shape
  functions contains all the polynomials of total degree $\leq k$.
\par
 The following theorem is the main result of this paper.
\begin{theorem}\label{thm:SA} Given $0\leq m\leq k$, $0\leq \ell\leq m+1$ and $n\geq 1$,
 we have,
 under assumptions \eqref{eq:omBdd}--\eqref{eq:InvEst2},
\begin{alignat}{3}
 |\om^{n}\chi-\PK(\om^{n}\chi)|_{H^\ell(K)}
  &\leq C\hK^{m+1-\ell}d^{-(m+1)}\sum_{j=0}^{m} d^j |\chi|_{H^j(K)}
   &\qquad&\forall\,\chi\in\cP,
  \label{eq:SA1}
\intertext{and, for $n\geq m+1$,}
 |\om^{n}\chi-\PK(\om^{n}\chi)|_{H^\ell(K)}
  &\leq C\hK^{m+1-\ell}d^{-(m+1)}\sum_{j=0}^{m} d^j |\om^j\chi|_{H^j(K)}
   &\qquad&\forall\,\chi\in\cP,
  \label{eq:SA2}
\end{alignat}
 where
  the positive constant $C$ in \eqref{eq:SA1} and \eqref{eq:SA2} depends only on $N$,
 $n$, $k$, $\Com$, $\CInt$,  $\Cinv$  and $C_\diamond$.
\end{theorem}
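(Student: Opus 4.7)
The starting point is that $\oK^n\chi$ lies in $\cP$---because $\oK$ is a scalar, $\chi\in\cP$, and $\cP$ is a vector space---so the invariance \eqref{eq:Invariant} lets us rewrite
\[
\om^n\chi - \PK(\om^n\chi) = (\om^n - \oK^n)\chi - \PK\!\bigl((\om^n - \oK^n)\chi\bigr).
\]
I would then apply the interpolation error estimate \eqref{eq:IntEst} on the right and pass from $W^\ell_\infty(K)$ to $H^\ell(K)$ via the trivial embedding $|\cdot|_{H^\ell(K)}\le C\hK^{N/2}|\cdot|_{W^\ell_\infty(K)}$, reducing the task to a pointwise estimate of $|(\om^n-\oK^n)\chi|_{W^{k+1}_\infty(K)}$.

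To prove \eqref{eq:SA1}, I would expand this $W^{k+1}_\infty$ seminorm by the Leibniz rule, using $|\om^n-\oK^n|_{L_\infty(K)}\le C\hK d^{-1}$ (obtained from the factorisation $\om^n-\oK^n=(\om-\oK)\sum_{i=0}^{n-1}\om^i\oK^{n-1-i}$ together with \eqref{eq:oTBdd} and \eqref{eq:diffBdd}) and $|\om^n|_{W^r_\infty(K)}\le Cd^{-r}$ for $r\ge 1$ (from \eqref{eq:omBdd} and the iterated product rule). The resulting $W^j_\infty(K)$ seminorms of $\chi$ would be converted to $H^j(K)$ seminorms by \eqref{eq:InvEst2}; the top-order term $j=k+1$ is first transformed by \eqref{eq:InvEst} at $s=\infty$ (shedding one derivative at cost $\hK^{-1}$) before applying \eqref{eq:InvEst2}. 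Assembling these pieces gives \eqref{eq:SA1} in the special case $m=k$. The descent to general $0\le m\le k$ is then achieved by factorising the prefactor as $\hK^{k+1-\ell}d^{-(k+1)}=(\hK/d)^{k-m}\hK^{m+1-\ell}d^{-(m+1)}$ (using $\hK\le d$) and invoking the $s=2$ version of \eqref{eq:InvEst}, namely $|\chi|_{H^j(K)}\le C\hK^{m-j}|\chi|_{H^m(K)}$, to absorb the tail contributions with $j>m$; the surplus $(\hK/d)^{k-m}$ cancels the excess $(d/\hK)^{j-m}$ generated by this inverse estimate.

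For \eqref{eq:SA2} the same framework applies, but the Leibniz expansion is rearranged so that each $\om^i$ remains paired with $\chi$: using $(\om^n-\oK^n)\chi=\sum_{i=0}^{n-1}\oK^{n-1-i}(\om-\oK)(\om^i\chi)$ and splitting each summand as $(\om-\oK)\cdot(\om^i\chi)$, one produces bounds involving $|\om^i\chi|_{W^s_\infty(K)}$ in place of $|\chi|_{W^s_\infty(K)}$, and these are converted to the $|\om^j\chi|_{H^j(K)}$ terms on the right of \eqref{eq:SA2} via inverse-type estimates adapted to the composite quantity $\om^i\chi$. The hypothesis $n\ge m+1$ ensures that the index $i$ in the sum runs over enough values to cover $0\le j\le m$, while indices $i>m$ are absorbed by the same descent mechanism. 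The main technical obstacle is the careful bookkeeping in these Leibniz expansions---particularly the boundary case at the top differentiation order and the coordinated interaction between the descent from $k$ to $m$, the inverse estimates \eqref{eq:InvEst}--\eqref{eq:InvEst2}, and the $\om$-weighted quantities on the right-hand sides---rather than any single difficult estimate.
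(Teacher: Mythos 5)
Your treatment of \eqref{eq:SA1} is essentially correct and is a legitimate variant of the paper's route: instead of the binomial splitting \eqref{eq:Splitting} combined with the reduction Lemma~\ref{lem:Reduction1}, you use the single factor $\om^n-\oK^n$, exploiting $|\om^n-\oK^n|_{L_\infty(K)}\lesssim \hK d^{-1}$ and $|\om^n-\oK^n|_{\Wi{r}(K)}\lesssim d^{-r}$ for $r\ge 1$. Since only one order of reduction (from $k+1$ down to $k$) is needed to keep $\chi$-seminorms of order at most $k$, this single factor of smallness suffices, and your descent from $k$ to general $m$ via $(\hK/d)^{k-j}\le 1$ together with \eqref{eq:InvEst} at $s=2$ checks out. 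This part is fine, and arguably a little leaner than the paper's argument for \eqref{eq:SA1} alone.

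The genuine gap is in your plan for \eqref{eq:SA2}. The inverse estimates \eqref{eq:InvEst} and \eqref{eq:InvEst2} are hypotheses only for elements of $\cP$, and $\om^i\chi$ is not in $\cP$; the ``inverse-type estimates adapted to the composite quantity $\om^i\chi$'' on which your argument hinges are neither assumed nor derived, and producing them is precisely the crux of \eqref{eq:SA2}. The right-hand side of \eqref{eq:SA2} requires terms $d^j|\om^j\chi|_{H^j(K)}$ in which the power of $\om$ matches the order of the seminorm, and these cannot be recovered afterwards from \eqref{eq:SA1}-type terms, because $|\chi|_{H^j(K)}$ is not controlled by $|\om^j\chi|_{H^j(K)}$ (in the intended applications $\om$ is a cutoff that vanishes on part of $K$); nor can you peel powers of $\om$ off inside a seminorm. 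If you try to handle $|\om^i\chi|_{\Wi{k+1}(K)}$ by Leibniz so that the polynomial inverse estimates apply to $\chi$ alone, you destroy the pairing and land back at \eqref{eq:SA1}. The paper's device is to never apply an inverse estimate to anything but a genuine polynomial: the constant factors $\oK^{n-q}$ are kept attached through \eqref{eq:P1}--\eqref{eq:P2}, the hypothesis $n\ge m+1$ is used to guarantee $n-q\ge j$ so that $\oK^{n-q}$ can be replaced by $\oK^j$ up to bounded constants, and only then is $\oK^j\chi$ converted into $\om^j\chi$ through the recursive Lemma~\ref{lem:Binomial}, whose proof again applies Lemma~\ref{lem:Reduction1} only to elements of $\cP$. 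Your sketch has no counterpart to this conversion step, and your explanation of where $n\ge m+1$ enters (``enough values of $i$'') does not correspond to any concrete mechanism in your decomposition, which has $n$ terms regardless. To complete your approach you would need to formulate and prove an analogue of Lemma~\ref{lem:Binomial} (an index-matched estimate relating $d^p|\om^i\chi|_{H^p(K)}$ or $d^j|\oK^j\chi|_{H^j(K)}$ to $\sum_{p\le j} d^p|\om^p\chi|_{H^p(K)}$), rather than appeal to inverse estimates for $\om^i\chi$ that are not available.
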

\begin{remark}\label{rem:SA}\rm
  For an arbitrary smooth function $\chi$, the interpolation error estimates in
  \eqref{eq:SA1} and \eqref{eq:SA2} will involve the $(m+1)$-order Sobolev norm
  $|\chi|_{H^{m+1}(K)}$.  The fact that the right-hand sides of \eqref{eq:SA1} and \eqref{eq:SA2}
  only involve Sobolev norms up to order $m$ can be exploited to show that the
  energy norm of a discrete harmonic
  (or biharmonic) function on a subdomain can be bounded by a lower order norm on
  a larger subdomain, which is the discrete version of a Caccioppoli estimate.  Such estimates
  play a key role in the local/interior error analysis of finite element methods
  (cf. \cite[Section~9.1]{Wahlbin:2000:Handbook} and \cite[Section~3]{DGS:2011:Local}).
\end{remark}
\begin{remark}\label{rem:Observation}\rm
  The key observation is that, by the binomial theorem,
\begin{equation}\label{eq:Splitting}
  \om^n\chi-\PK(\om^n\chi)=\sum_{q=1}^n \binom{n}{q}\Big[\diff^q\oK^{n-q}\chi
  -\PK\big(\diff^q\oK^{n-q}\chi\big)\Big]\qquad\forall\,\chi\in\cP,
\end{equation}
 where the summation is over $q\geq 1$ because
  $\oK^n\chi-\PK(\oK^n\chi)=0$ by \eqref{eq:Invariant}.  This is
  why only derivatives up to order $m$ appear on the right-hand side
 of \eqref{eq:SA1} and \eqref{eq:SA2} (cf. Lemma~\ref{lem:Reduction1} below).
\end{remark}
\begin{example}\label{example:classical}\rm
  For $n=1$, $m=\ell=1$ or $m=\ell=0$, it follows from \eqref{eq:SA1}  that
\begin{align*}
  |\om\chi-\PK(\om\chi)|_{H^1(K)}&\leq C\hK d^{-2}
  \big(\|\chi\|_{L_2(K)}+d|\chi|_{H^1(K)}\big),\\
  \|\om\chi-\PK(\om\chi)\|_{L_2(K)}&\leq  C\hK d^{-1}\|\chi\|_{L_2(K)},
\end{align*}
 which cover the results in \cite[Section 2]{NS:1974:SA},
 \cite[Section~6]{BNS:1975:MaxNorm} and \cite[Section~3]{AL:1995:Stokes}.
\end{example}
\begin{example}\label{example:DG}\rm
  For $n=2$ and $m=\ell=1$, it follows from \eqref{eq:SA2}  that
\begin{equation*}
  |\om^2\chi-\PK(\om^2\chi)|_{H^1(K)}\leq
  C\hK d^{-2}\big(\|\chi\|_{L_2(K)}+d|\om\chi|_{H^1(K)}\big),
\end{equation*}
 which is the result in \cite[Section~2]{DGS:2011:Local}.
\end{example}
\begin{example}\label{example:c0IP}\rm
  For $n=4$, $m=2$ and $1\leq\ell \leq 3$, it follows from \eqref{eq:SA2} that
\begin{align*}
  |\om^4-\PK(\om^4\chi)|_{H^\ell(K)}&\leq C\hK^{3-\ell}d^{-3}
  \big(\|\chi\|_{L_2(K)}+d|\om\chi|_{H^1(K)}+d^2|\om^2\chi|_{H^2(K)}\big),
\end{align*}
 which is the result in \cite[Section~3.3]{Leykekhman:2019:C0IP}.
\end{example}
\par
  We will use the notation $A\lesssim B$ to represent the inequality
  $A\leq \text{(constant)}B$, where the hidden generic positive constant
  only depends on  $N$,
 $n$, $k$, $\Com$, $\CInt$,  $\Cinv$  and $C_\diamond$.
\par
 The proof of Theorem~\ref{thm:SA} requires two lemmas that are
 based on the product rule (Leibniz formula) and the binomial theorem.
 The first lemma allows the reduction of the
 order of the Sobolev seminorm.
\begin{lemma}\label{lem:Reduction1}
  For $1\leq q\leq n$, $0\leq p\leq k+1$ and $s=2,\infty$,  we have
\begin{equation}\label{eq:Reduction}
  d^{p}|\diff^q \chi|_{\Ws{p}(K)}\lesssim
  \sum_{j=0}^{(p-q)\vee0}d^j|\chi|_{\Ws{j}(K)}
  \qquad\forall\,\chi\in\cP,
\end{equation}
 where $a\vee b=\max(a,b)$.
\end{lemma}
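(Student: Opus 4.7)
The plan is to combine the product rule (Leibniz formula) with the multi-factor Leibniz expansion of $\diff^{q}$, and then to absorb the Sobolev norms of $\chi$ of order higher than $(p-q)\vee 0$ by a single application of the inverse estimate \eqref{eq:InvEst}.

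First I would apply the Leibniz rule to write, for every multi-index $\alpha$ with $|\alpha|=p$,
\[
\partial^{\alpha}(\diff^{q}\chi)=\sum_{\beta\leq\alpha}\binom{\alpha}{\beta}\,\partial^{\beta}(\diff^{q})\,\partial^{\alpha-\beta}\chi ,
\]
take the $L_{s}(K)$ norm, and pull $\partial^{\beta}(\diff^{q})$ out in the $L_{\infty}(K)$ norm. The pointwise ingredient I would then establish is
\[
\bigl\|\partial^{\beta}(\diff^{q})\bigr\|_{L_{\infty}(K)}\lesssim d^{-|\beta|}(\hK/d)^{(q-|\beta|)\vee 0}.
\]
This follows from the multi-factor Leibniz formula for $\diff^{q}$: any distribution of the $|\beta|$ derivatives among the $q$ factors of $\diff=\om-\oK$ must leave at least $(q-|\beta|)\vee 0$ factors undifferentiated. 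An undifferentiated factor contributes $\|\diff\|_{L_{\infty}(K)}\leq \CoT d^{-1}\hK$ by \eqref{eq:diffBdd}, while a factor carrying a derivative of order $|\gamma|\geq 1$ contributes $\|\partial^{\gamma}\diff\|_{L_{\infty}(K)}=\|\partial^{\gamma}\om\|_{L_{\infty}(K)}\leq \Com d^{-|\gamma|}$ from \eqref{eq:omBdd} (since $\oK$ is a constant), and the non-zero $|\gamma|$'s sum to $|\beta|$.

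Substituting this bound and reindexing with $j=p-|\beta|$, I would arrive at
\[
d^{p}\,|\diff^{q}\chi|_{\Ws{p}(K)}\lesssim\sum_{j=0}^{p}d^{j}\,(\hK/d)^{(q-p+j)\vee 0}\,|\chi|_{\Ws{j}(K)}.
\]
Writing $m^{\star}=(p-q)\vee 0$, the terms with $j\leq m^{\star}$ already appear on the right-hand side of \eqref{eq:Reduction} up to constants, because $(\hK/d)^{(q-p+j)\vee 0}\leq 1$ by \eqref{eq:dh}. For the remaining terms with $j>m^{\star}$ (so $q-p+j>0$), I would invoke the inverse estimate \eqref{eq:InvEst}, which is legal since $\chi\in\cP$ and $j\leq p\leq k+1$, to replace $|\chi|_{\Ws{j}(K)}$ by a constant multiple of $\hK^{m^{\star}-j}|\chi|_{\Ws{m^{\star}}(K)}$; the coefficient then collapses to $d^{p-q}\hK^{q-p+m^{\star}}$, which equals $d^{m^{\star}}$ when $p\geq q$ and is bounded by $1=d^{m^{\star}}$ (using $\hK\leq d$) when $p<q$. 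Either way, the term is absorbed into the sum on the right-hand side of \eqref{eq:Reduction}.

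The main obstacle is the exponent bookkeeping in the absorption step: the crucial identity to verify is
\[
d^{p}\cdot d^{-(p-j)}\,(d^{-1}\hK)^{q-p+j}\,\hK^{m^{\star}-j}=d^{p-q}\hK^{q-p+m^{\star}},
\]
which equals $d^{m^{\star}}$ when $p\geq q$ and is controlled by a single use of $\hK\leq d$ when $p<q$. Once this verification is done, the rest is a routine consolidation of the Leibniz and interpolation sums.
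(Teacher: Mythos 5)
Your proof is correct and follows essentially the same route as the paper: your unified pointwise bound $\|\partial^{\beta}(\diff^{q})\|_{L_{\infty}(K)}\lesssim d^{-|\beta|}(\hK/d)^{(q-|\beta|)\vee 0}$ is exactly the paper's two-case estimate on $|\diff^q|_{\Wi{r}(K)}$, and the subsequent Leibniz expansion with the inverse estimate \eqref{eq:InvEst} absorbing the terms of order above $(p-q)\vee 0$ mirrors the paper's splitting of the sum at $r=p\wedge q$. The exponent bookkeeping in your absorption step checks out, so no changes are needed.
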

\begin{proof}  Note that  \eqref{eq:omBdd}, \eqref{eq:dh},
 \eqref{eq:diffBdd} and the product rule imply
\begin{equation}\label{eq:R1}
  |\diff^q|_{\Wi{r}(K)}\lesssim
  \begin{cases}
    d^{-r}&\qquad\text{if $r> q$}\\[4pt]
   d^{-q}\hK^{q-r}&\qquad\text{if $r\leq q$}
  \end{cases}.
\end{equation}
\par
 Let $\chi\in\cP$ be arbitrary.
 It follows from \eqref{eq:R1} and the product rule again that
\begin{align}\label{eq:R2}
  &d^{p}|\diff^q \chi|_{\Ws{p}(K)}\lesssim d^{p}
  \sum_{r=0}^{p}|\diff^q|_{\Wi{r}(K)}|\chi|_{\Ws{p-r}(K)}
  \notag\\
   &\hspace{50pt}\lesssim
   \sum_{r=0}^{p\wedge q}d^{p-q}\hK^{q-r}|\chi|_{\Ws{p-r}(K)}+
   \sum_{r=(p\wedge q)+1}^{p} d^{p-r}|\chi|_{\Ws{p-r}(K)},
\end{align}
 where $p\wedge q=\min(p,q)$ and we have adopted the convention that a sum is void if
 the lower index is strictly greater than the upper index.
 Note that the second sum on the right-hand side of \eqref{eq:R2} is void
 if and only if $p\leq q$.
\par
 In the case where $p\leq q$, the first sum on the right-hand side of \eqref{eq:R2}
 satisfies
\begin{align*}
  \sum_{r=0}^{p\wedge q}d^{p-q}\hK^{q-r}|\chi|_{\Ws{p-r}(K)}&=\sum_{r=0}^{p}
    (\hK/d)^{q-p}\hK^{p-r}|\chi|_{\Ws{p-r}(K)}
    \lesssim \|\chi\|_{L_s(K)}\\
\end{align*}
 because of \eqref{eq:dh} and \eqref{eq:InvEst}.  On the other hand, if $p>q$, then we have
\begin{align*}
  \sum_{r=0}^{p\wedge q}d^{p-q}\hK^{q-r}|\chi|_{\Ws{p-r}(K)}&=\sum_{r=0}^{q}
  d^{p-q}\hK^{q-r}|\chi|_{\Ws{p-r}(K)}\lesssim d^{p-q}|\chi|_{\Ws{p-q}(K)}
\end{align*}
 because of \eqref{eq:InvEst}.
\par
 It follows that
\begin{equation*}
  \sum_{r=0}^{p\wedge q}d^{p-q}\hK^{q-r}|\chi|_{\Ws{p-r}(K)}\lesssim
  d^{p-(p\wedge q)}|\chi|_{\Ws{p-(p\wedge q)}(K)},
\end{equation*}
 which together with \eqref{eq:R2} implies
\begin{align*}
   d^p|\diff^q \chi|_{\Ws{p}(K)}&\lesssim
   \sum_{r=p\wedge q}^p d^{p-r}|\chi|_{\Ws{p-r}(K)}.
\end{align*}
 The estimate \eqref{eq:Reduction} is then obtained by the change of index $j=p-r$.
\end{proof}
\par
 The second lemma allows the switching from $|\oK^j\chi|_{H^j(K)}$ to
 $|\om^j\chi|_{H^j(K)}$.
\begin{lemma}\label{lem:Binomial}
 For $0\leq j\leq m$, we have
\begin{alignat}{3}\label{eq:Binomial}
  d^{j}|\oK^j\chi|_{H^j(K)}
  &\lesssim \sum_{p=0}^j d^p|\om^{p}\chi|_{H^p(K)}
  & \qquad&\forall\,\chi\in\cP.
\end{alignat}
\end{lemma}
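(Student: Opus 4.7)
The plan is to prove the estimate by induction on $j$, leveraging the companion expansion $\om^j=(\oK+\diff)^j$ (which is dual to the one used in Remark~\ref{rem:Observation}) and Lemma~\ref{lem:Reduction1}.

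The base case $j=0$ is immediate since both sides reduce to $\|\chi\|_{L_2(K)}$. For the inductive step, assume the claim for $0,1,\ldots,j-1$. Expanding $\om^j=(\oK+\diff)^j$ by the binomial theorem and isolating the $q=0$ term yields the identity
\begin{equation*}
 \oK^j\chi=\om^j\chi-\sum_{q=1}^{j}\binom{j}{q}\diff^q\oK^{j-q}\chi.
\end{equation*}
Taking the $H^j(K)$ seminorm and multiplying by $d^j$ separates out $d^j|\om^j\chi|_{H^j(K)}$ (which is exactly the $p=j$ term on the right-hand side of \eqref{eq:Binomial}) plus a sum of remainder terms $d^j|\diff^q\oK^{j-q}\chi|_{H^j(K)}$ with $1\leq q\leq j$.

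Since $\oK$ is a scalar, $\oK^{j-q}\chi\in\cP$, so Lemma~\ref{lem:Reduction1} applies with $s=2$, exponent $p=j$, and gives
\begin{equation*}
 d^j|\diff^q\oK^{j-q}\chi|_{H^j(K)}
 \lesssim \sum_{i=0}^{j-q}d^i|\oK^{j-q}\chi|_{H^i(K)}
 = \sum_{i=0}^{j-q}|\oK|^{j-q}d^i|\chi|_{H^i(K)}.
\end{equation*}
Here I use $1\leq q\leq j$ so that $(j-q)\vee 0=j-q$. The next step is to convert the factor $|\oK|^{j-q}|\chi|_{H^i(K)}$ into something to which the induction hypothesis can be applied: since $i\leq j-q$, I split $|\oK|^{j-q}=|\oK|^{(j-q)-i}\cdot|\oK|^i$, bound the first factor by $\Com^{(j-q)-i}\lesssim 1$ via \eqref{eq:oTBdd}, and recognize $|\oK|^i|\chi|_{H^i(K)}=|\oK^i\chi|_{H^i(K)}$.

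After swapping the order of summation, every remainder contribution is dominated by a finite combination of $d^i|\oK^i\chi|_{H^i(K)}$ with $0\leq i\leq j-1$, to each of which the inductive hypothesis applies, yielding $\lesssim\sum_{p=0}^{i}d^p|\om^p\chi|_{H^p(K)}\leq\sum_{p=0}^{j-1}d^p|\om^p\chi|_{H^p(K)}$. Combining with the initial $d^j|\om^j\chi|_{H^j(K)}$ term closes the induction. The main technical obstacle is a bookkeeping one: making sure that the intermediate quantities coming out of Lemma~\ref{lem:Reduction1} are still of the form $|\text{polynomial in }\cP|_{H^i(K)}$ so that the inductive hypothesis is genuinely applicable, which is precisely what having $\oK$ be a scalar constant buys us.
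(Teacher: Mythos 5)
Your proof is correct and follows essentially the same route as the paper: the same binomial identity $\oK^j=\om^j-\sum_{q\geq1}\binom{j}{q}\diff^q\oK^{j-q}$, the same application of Lemma~\ref{lem:Reduction1} with $s=2$ and exponent $j$ (exploiting that $\oK^{j-q}\chi\in\cP$), the same use of \eqref{eq:oTBdd} to trade $|\oK|^{j-q}$ for $|\oK|^i$, and the induction you spell out is exactly the paper's ``recursive argument.'' The only difference is cosmetic: you keep the scalar $\oK^{j-q}$ inside when invoking the lemma and extract it afterwards, whereas the paper pulls it out first.
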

\begin{proof}  According to the binomial theorem, we have
\begin{equation}\label{eq:B1}
\oK^j=\om^j-\sum_{\ell=1}^{j}\binom{j}{\ell}\diff^\ell\oK^{j-\ell}.
\end{equation}
 It follows from \eqref{eq:oTBdd}, Lemma~\ref{lem:Reduction1}
 (with $s=2$) and \eqref{eq:B1} that
\begin{align*}
  d^j|\oK^j\chi|_{H^j(K)}&\lesssim
  d^j|\om^j\chi|_{H^j(K)}+\sum_{\ell=1}^j|\oK|^{j-\ell}d^j|\diff^\ell\chi|_{H^j(K)}\\
  &\lesssim  d^j|\om^j\chi|_{H^j(K)}+
  \sum_{\ell=1}^j\sum_{p=0}^{j-\ell} d^p|\oK^{j-\ell}\chi|_{H^p(K)}\\
  &\lesssim d^j|\om^j\chi|_{H^j(K)}
    +\sum_{\ell=1}^j\sum_{p=0}^{j-\ell}d^p|\oK^p\chi|_{H^p(K)}\\
    &=d^j|\om^j\chi|_{H^j(K)}+\sum_{p=0}^{j-1}(j-p)d^p|\oK^p\chi|_{H^p(K)}\\
    &\lesssim d^j|\om^j\chi|_{H^j(K)}+\sum_{p=0}^{j-1}d^p|\oK^p\chi|_{H^p(K)},
\end{align*}
 which implies \eqref{eq:Binomial} through a recursive argument.
\end{proof}
 We are now ready to prove Theorem~\ref{thm:SA}.
 From \eqref{eq:IntEst} and Lemma~\ref{lem:Reduction1} (with $s=\infty$),
  we find, for $n\geq q\geq 1$,
\begin{align}\label{eq:P1}
  &|\diff^q\oK^{n-q}\chi
  -\PK\big(\diff^q\oK^{n-q}\chi\big)|_{H^\ell(K)}\notag\\
  &\hspace{80pt}\lesssim
  (\hK^{N/2}|\oK^{n-q}|)|\diff^q\chi
  -\PK\big(\diff^q\chi\big)|_{\Wi{\ell}(K)}\notag\\
  &\hspace{80pt}
  \lesssim (\hK^{N/2}|\oK^{n-q}|)\hK^{k+1-\ell}|\diff^q\chi|_{\Wi{k+1}(K)}\notag\\
   &\hspace{80pt}\lesssim (\hK^{N/2}|\oK^{n-q}|)\Big(
    \hK^{k+1-\ell}d^{-(k+1)}
    \sum_{j=0}^{(k+1-q)\vee 0} d^{j}|\chi|_{\Wi{j}(K)}\Big).
\end{align}
\par
 The next step is to reduce the summation on the right-hand side of \eqref{eq:P1} from the range
 $0\leq j\leq [(k+1-q)\vee0]$ to the range $0\leq j\leq [(m+1-q)\vee0]$.
\par
 In the case where $k+1-q\leq0$, we also have $m+1-q\leq 0$ (since $m\leq k$ is
 one of the assumptions in
 Theorem~\ref{thm:SA}) and hence, in view of \eqref{eq:dh},
\begin{align}\label{eq:C1}
   \hK^{k+1-\ell} d^{-(k+1)}
    \sum_{j=0}^{(k+1-q)\vee 0} d^{j}|\chi|_{\Wi{j}(K)}&=(\hK/d)^{k-m}
     \hK^{m+1-\ell} d^{-(m+1)}\|\chi\|_{L_\infty(K)}\notag\\
     &\leq \hK^{m+1-\ell}d^{-(m+1)}
    \sum_{j=0}^{(m+1-q)\vee 0} d^{j}|\chi|_{\Wi{j}(K)}.
\end{align}
\par
 Suppose $k+1-q>0$ and let $j$
  be an index within the range of the summation on the
  right-hand side of \eqref{eq:P1}, i.e.,
\begin{equation}\label{eq:Range}
  0\leq j\leq k+1-q.
\end{equation}
 If $j$ is strictly greater than  $m+1-q$, we have, by \eqref{eq:dh}, \eqref{eq:InvEst}
  and \eqref{eq:Range},
\begin{align}\label{eq:C2}
 \hK^{k+1-\ell} d^{-(k+1)}\big(d^j|\chi|_{\Wi{j}(K)}\big)&\lesssim
    \hK^{k+1-\ell}d^{-(k+1)}\big(d^j \hK^{m+1-q-j}|\chi|_{\Wi{m+1-q}(K)}\big)\notag\\
    &=(\hK/d)^{k+1-q-j}\hK^{m+1-\ell}d^{-(m+1)} \big(d^{m+1-q}|\chi|_{\Wi{m+1-q}(K)}\big)\notag\\
    &\leq \hK^{m+1-\ell}d^{-(m+1)}\big( d^{m+1-q}|\chi|_{\Wi{m+1-q}(K)}\big).
\end{align}
 If $j$ is less than or equal to $m+1-q$, we have
\begin{align}\label{eq:C3}
  \hK^{k+1-\ell}d^{-(k+1)}\big(d^j|\chi|_{\Wi{j}(K)}\big)
  &=(\hK/d)^{k-m}\hK^{m+1-\ell}d^{-(m+1)}
   \big(d^j|\chi|_{\Wi{j}(K)}\big)\notag\\
   &\leq \hK^{m+1-\ell}d^{-(m+1)}\big(d^j|\chi|_{\Wi{j}(K)}\big),
\end{align}
 again because $m\leq k$ is one of the assumptions in Theorem~\ref{thm:SA}.
\par
 Since all the possibilities are  covered by \eqref{eq:C1}, \eqref{eq:C2} and \eqref{eq:C3},
 we  conclude from \eqref{eq:InvEst2} and \eqref{eq:P1} that
\begin{align}\label{eq:P2}
  &|\diff^q\oK^{n-q}\chi
  -\PK\big(\diff^q\oK^{n-q}\chi\big)|_{H^\ell(K)}\notag\\
  &\hspace{80pt}\lesssim
   (\hK^{N/2}|\oK^{n-q}|)\Big( \hK^{m+1-\ell}d^{-(m+1)}
    \sum_{j=0}^{(m+1-q)\vee 0} d^{j}|\chi|_{\Wi{j}(K)}\Big)\notag\\
  &\hspace{80pt}\lesssim
   \hK^{m+1-\ell}d^{-(m+1)}
    \sum_{j=0}^{(m+1-q)\vee 0} d^{j}|\oK^{n-q}\chi|_{H^j(K)}.
\end{align}
\par
  Note that $n-q\geq0$ and hence, in view of \eqref{eq:oTBdd}, we
  can simply drop $\oK^{n-q}$ from the right-hand side of \eqref{eq:P2}
   to arrive at
\begin{align}\label{eq:P3}
   &|\diff^q\oK^{n-q}\chi
  -\PK\big(\diff^q\oK^{n-q}\chi\big)|_{H^\ell(K)}\lesssim
   \hK^{m+1-\ell}d^{-(m+1)}
    \sum_{j=0}^{(m+1-q)\vee 0} d^{j}|\chi|_{H^j(K)},
\end{align}
 and the estimate \eqref{eq:SA1} follows immediately
 from \eqref{eq:Splitting} and \eqref{eq:P3}.
\par
 In the case where $n\geq m+1$, we have
   $$n-q\geq [(m+1-q)\vee0]$$
 and hence
   $$n-q-j\geq0$$
 for any index $j$ in the range of the summation
 on the right-hand side of \eqref{eq:P2}.  Therefore we
  can replace $\oK^{n-q}$ by $\oK^j$ inside the summation
 to obtain, through Lemma~\ref{lem:Binomial},
\begin{align}\label{eq:P4}
   &|\diff^q\oK^{n-q}\chi
  -\PK\big(\diff^q\oK^{n-q}\chi\big)|_{H^\ell(K)}\notag\\
   &\hspace{120pt}
   \lesssim
   \hK^{m+1-\ell}d^{-(m+1)}
    \sum_{j=0}^{(m+1-q)\vee 0} d^{j}|\oK^j\chi|_{H^j(K)}\notag\\
     &\hspace{120pt}
     \lesssim
   \hK^{m+1-\ell}d^{-(m+1)}
    \sum_{j=0}^{(m+1-q)\vee 0} d^{j}|\om^j\chi|_{H^j(K)}.
\end{align}
 The estimate \eqref{eq:SA2} follows immediately
 from \eqref{eq:Splitting} and \eqref{eq:P4}.

\end{document}